\documentclass[11pt]{article}
\usepackage[centertags]{amsmath}
\usepackage[normal]{subfigure}
\usepackage{wrapfig}
\usepackage{verbatim}
\usepackage{amsfonts}
\usepackage{amssymb}
\usepackage{amsthm}
\usepackage{amsmath}
\usepackage{newlfont}
\usepackage{graphicx}
\usepackage{epsfig}
\usepackage{amsmath}
\usepackage{color}
\usepackage{multicol}
\usepackage{amssymb,amsmath}
\usepackage{latexsym}
\usepackage[normalem]{ulem}
\usepackage{cancel}
\usepackage{enumerate}
\usepackage{amsmath, amsthm, amssymb}
\usepackage{MnSymbol,wasysym}

\DeclareTextFontCommand{\rm}{\rmfamily}

\setlength{\textheight}    {8.63in} 
\setlength{\textwidth}     {5.9in}
\setlength{\topmargin}     {-0.2in} 
\setlength{\oddsidemargin} {0.3in}
\setlength{\evensidemargin}{0.3in} 
\setlength{\headheight}    {27.19pt}
\setlength{\headsep}       {7pt}

\oddsidemargin  0.1875 in 
\evensidemargin 0.1875in
\textwidth      6 in       
\textheight     230mm 
\voffset=-4mm 

\newtheorem*{remark}{Remarks}
\newtheorem{theorem}{Theorem}[section]

\newtheorem*{theorem*}{Euler's Finite Difference Theorem}
\newtheorem{exmp}{Example}
\newtheorem{proposition}[theorem]{Proposition}
\newtheorem{lemma}[theorem]{Lemma}


\usepackage{authblk}
\author[a]{Shahar Nevo}
\author[b]{Irina Raichik}

\affil[a,b]{Department of Mathematics, Bar-Ilan University, Ramat-Gan, 52900, Israel}
{
    \makeatletter
    \renewcommand\AB@affilsepx{: \protect\Affilfont}
    \makeatother

    \affil[ ]{Email addresses}

    \makeatletter
    \renewcommand\AB@affilsepx{, \protect\Affilfont}
    \makeatother

    \affil[a]{nevosh@math.biu.ac.il}
    \affil[b]{irina.raichik@gmail.com}
}


\title{New elementary formulas for any derivative of any rational function}
\date{}

\begin{document}
\maketitle

\begin{abstract}
In this paper we introduce elementary and completely explicit formulas for the derivative of any order of any function of the type
$\dfrac{1}{p}$, where $p$ is a polynomial with known zeros. These formulas lead easily to corresponding explicit formulas of any derivative of any rational function. The formulas are canonical in the sense of differentiation: they depend on some dummy parameters but do not become more complicated when the order of the derivative increases. As applications we get other elementary and explicit results: a solution to the general interpolation problem of Hermite, finding the primitive function for any rational function and getting an explicit formula for the $n$-th element of certain recursive sequences. We point out the possibility of producing general identities and in particular combinatorial identities by the formulas. We also discuss the very practical possibility of writing computer programs that will carry out some of the above applications.

\end{abstract}

\smallskip
\noindent \textbf{Keywords and phrases:} rational function, higher order derivatives, interpolation, integration, recursive sequences, combinatorial identities, computer applications\\
\smallskip
\noindent \textbf{MSC:} 05A19, 26A24, 26C15, 65D03, 97I30, 97I50

\section{Introduction}

\noindent Given a polynomial of degree $n$,
\begin{equation} \label{one}
p(z)=z^{n}+a_{n-1}z^{n-1}+...+a_{0}
\end{equation}
it is very simple to find an explicit formula for its derivative of any order $t\geq 0$. These derivatives are of course identically zero for $t\geq n+1$.
\noindent However, if we introduce $p(z)$ in a multiplicative form,
$$p(z)=(z-z_{1})^{m_{1}}(z-z_{2})^{m_{2}}\cdots(z-z_{L})^{m_{L}}\;,$$
where $z_{1},z_{2},...,z_{L} $ are the distinct roots of $p(z)$ and $m_{i}$ are natural numbers, then a general formula for the derivative of order $t$ is much more complicated an it is based on the general Leibniz rule for differentiating the product of a finite number of functions
\begin{equation} \label{two}
\left ( f_{1}\cdot f_{2}\cdots f_{p}  \right )^{(m)}=\displaystyle\sum_{\substack{k_1+k_2+...+k_p=m\\k\geq0, k\in\mathbb{Z}}}f_{1} ^{(k_1)}f_{2} ^{(k_2)}\cdots f_{p}^{(k_p)} 
\end{equation}
This formula is indeed explicit, but its application depends on the number of ways to write a natural number $m$ as an ordered sum of $p$ non-negative integers, and so it is complicated. This is the situation also in other known explicit formulas we shall introduce in this paper.
\noindent The essence of this paper is to introduce (many) explicit and \textit{elementary} (but complicated) formulas for each derivative of any function of the form $h(z)=\dfrac{1}{p(z)}$, where $z$ is a complex variable. In contrast to the derivatives of a certain order of a polynomial, $p(z)$, these formulas are based on the multiplicative representation of $h(z)$, that is
\begin{equation} \label{three}
h(z)=\dfrac{1}{(z-z_{1})^{m_{1}}(z-z_{2})^{m_{2}}\cdots(z-z_{L})^{m_{L}}}\;,
\end{equation}
rather than the form $h(z)=\dfrac{1}{z^{n}+a_{n-1}z^{n-1}+...+a_{0}}$.

\noindent As a result of these formulas we can deduce formulas for the derivative of any order of any rational function, $R(z)=\dfrac{p_1(z)}{p_2(z)}$, by the basic Leibniz rule for the differentiation of product:
\begin{equation} \label{four}
\left ( f\cdot g \right )^{(n)}=\sum_{k=0}^{n}\binom{n}{k}f^{(k)}g^{(n-k)}
\end{equation}
(see \cite[p. 118]{Landau},  \cite[p. 170]{Meizler}).

\noindent Here we introduce $f=p_1$ in the form (\ref{one}) and $g=\dfrac{1}{p_2}$ in the form (\ref{three}). 

\noindent This paper is arranged as follows:\\
In Section 2 we introduce some known results about higher order derivatives.\\
In Section 3 we introduce our results on the formulas for derivatives of any order of functions of the form \ref{three}, which is the content of Theorem 3.1. In Section 4 we bring a few numerical examples of using these formulas. In Section 5 we show how to deduce one of these formulas from another one. In Section 6 we introduce some applications of the results. In Section 6.1 we explain how to get an explicit and elementary formula for the solution of the general interpolation problem of Hermite. In Section 6.2 we explain in general how to derive some identities by comparing the formulas we got in Theorem 3.1 to other known results. In section 6.3 we explain how to get combinatorial identities, (see \cite{Gould}), by Theorem 3.1. In Section 6.4 we show how to get explicit formulas for the primitive function of any rational function by involving Theorem 3.1 and the representation of such a function as a sum of partial fractions. We also show there how to find explicit formulas for the derivative of any order of any rational function. In Section 6.5 we show how to get by Theorem 3.1 explicit formulas for certain recursive sequences. In Section 6.6 we discuss the possibility of writing efficient computer programs in order to apply Theorem 3.1 for differentiation or integration of general rational functions and for the solution of the general interpolation problem.

\section{Higher order derivative}

\noindent Higher order derivatives for general functions (of one or more variables) and for certain classes of functions is an old subject in function theory. In addition to Leibniz formulas (2),(4) the most famous one is probably Cauchy's integral formula for the derivative of any order of an analytic function
$$
f^{(n)}(z)=\dfrac{n!}{2\pi i}\int_{\gamma}\dfrac{f(\zeta)d\zeta}{\left ( \zeta-z \right )^{n+1}}\,\; \;, n=0,1,2,...,
$$
\noindent see \cite[p. 120]{Ahlfors}, where $f$ is an analytic function in some domain $D$ in the complex plane $\mathbb{C}$ and $\gamma$
is a simple smooth and closed curve in $D$, whose interior lies in $D$ and $z$ is also in the interior of
 $\gamma$.

\noindent In the 19'th century the formula for the derivative of order $n$ of the composition of two sufficiently differentiable functions was invented, named after the Italian mathematician Fa\`a di Bruno.
\noindent It says
$$
\dfrac{\mathrm{d} ^{n}\left ( f\left ( g(x) \right ) \right )}{\mathrm{d} x^{n}}=\sum \dfrac{n!}{m_{1}!1!^{m_{1}}\cdot m_{2}!2!^{m_{2}}\cdot \cdot \cdot m_{n}!n!^{m_{n}}}\cdot f^{\left ( m_{1}+...+m_{n} \right )} \left ( g(x) \right )\cdot \prod_{j=1}^{n}\left ( g^{(j)}(x)) \right )^{m_j},
$$
where the summation is over all n-tuples of non-negative integers $\left ( m_1,...,m_n \right )$ satisfying the constraint $1\cdot m_1+2\cdot m_2+3\cdot m_3+...+n\cdot m_n=n$. For the history of Fa\`a di Bruno's formula see \cite{Craik}.

\noindent From recent years we have results about certain classes of functions, or specific types of higher order derivatives for general functions. For example, in \cite{SlevSaf}, the Slevinsky--Safouhi formula is introduced about iterated applying of the differential operator $\dfrac{\mathrm{d} }{x^{\mu}\mathrm{d} x}$. In \cite{KimSug}, the invariant Schwarzian derivative of higher order is discussed. In \cite{Sabatini}, higher order derivatives of period functions are discussed. We shall not go into details on these matters. Our results are much more elementary. So, returning to elementary functions, then in addition to polynomials, it is possible (and easy) to find a derivative of any order of
$x^{a},e^{x},\ln x$ and their linear combinations ($\sin x$ for example is a linear combination of $e^{ix},e^{-ix}$).

\noindent Among the functions of the form $h(z)=\dfrac{1}{p(z)}$, $p$ a polynomial, there is a simple formula for derivative of any order for $h(z)=\dfrac{1}{(z-a_1)(z-a_2)\cdots(z-a_L)}$ .

\noindent Indeed, by separating it into partial fractions we get $h(z)=\dfrac{A_1}{z-a_1}+\dfrac{A_2}{z-a_2}+\cdots+\dfrac{A_L}{z-a_L} \, ,$ where  $A_i=\displaystyle\prod_{\substack{j=1\\j\neq i}} 
 \frac{1}{a_i-a_j}$ for $1\leq i\leq L$. Now, since the $A_i$'s are constants it is easy to derive $h^{(t)} (z)$ of any desired order $t$. The next section deals with getting formulas of a different type for any derivative of functions of the form $h(z)=\dfrac{1}{p(z)}$. 

\section{The formulas for the derivative of any order of the reciprocal of a polynomial}

\noindent The formulas we introduce here in Theorem 3.1 are the heart of this paper, the source for all other results. They are elementary, but long. Their main addends are composed of 2 or 3 summations of pretty long terms.\\
In order to simplify this complexity, and to ease the use of these formulas in the sequel, we also write them in steps, according to the order of summation of each of their addends. For that we use some intermediate functions $\Psi_{1}(j);\;\Phi_{1}(j,r),\Phi_{2}(r);\;\Theta_{1}(j,r,p), \Theta_{2}(r,p)$ that will be defined in the statement of our theorem. For these intermediate functions we denote only their dependence on the indices of the summations $j, r, p$, but of course they depend also on the variable $z$ and the parameters $t, N, n_{1}, n_{2},...,n_{L}, s_{1}, s_{2},...,s_{L},s$. 

\begin{theorem}
Let 
\begin{equation} \label{five}
h(z)=\dfrac{1}{(z-a_1)^{n_1+1}(z-a_2)^{n_2+1}\cdots (z-a_L)^{n_L+1} } 
\end{equation} be a rational function such that $a_1,a_2,...,a_L$ are different complex numbers, and for any $1\leq i\leq L$, $n_i\geq 0$ is an integer. Let $z \in \mathbb{C}$ be a point which is not a pole of $h$. Let $s_{1}, s_{2},...,s_{L},s=s(z)$ be numbers in $\mathbb{C}$, such that $s\neq 0$ and $s_i=0$ if and only if $n_i=0$. \\
\noindent Then the following two collections of formulas hold:\\

\noindent For any $t\geq 0$ an integer, and for every integer $N$, $N\geq t+1$:\\
\newline
\hspace*{-2.8cm}{\scriptsize ${h^{(t)}(z)=\dfrac{t!}{s^{t}}\displaystyle\sum_{j=1}^{N}  \left [  \frac{j^{N-t}(-1)^{N-j+1}}{j!(N-j)!}\displaystyle\sum_{r=1}^{L} \left \{  \dfrac{(-1)^{n_r}}{s_{r}^{n_r}(a_r-z)^{1+n_r+t}} \displaystyle\sum_{p=0}^{n_r}\Bigg \langle \dfrac{(js-ps_r)^{(\displaystyle\sum_{m=1}^{L}n_m)+L-1+t}(-1)^{p}}{p!(n_r-p)!)}\left (\displaystyle\prod_{\substack{m=1\\m\neq r}}^{L}\left ( \displaystyle\prod_{q=0}^{n_m} \dfrac{1}{(js-qs_m)(a_r-z)-(js-ps_r)(a_m-z)} \right )  \right )
\Bigg \rangle\right \} \right ]}$ \\
}
\newline
For every $t\geq 0$, if $\displaystyle\left (\sum_{m=1}^{L}n_m  \right )+t\geq 1$:\\
\newline
\hspace*{-2.45cm}{\scriptsize $h^{(t)}(z)=\dfrac{1}{s^{t}}\left (  \displaystyle\prod_{r=1}^{L}\dfrac{1}{z-a_r} \right )\cdot(-1)^{(\displaystyle\sum_{m=1}^{L}n_m)+t}\displaystyle\sum_{\substack{r=1,\\n_r \geq 1}}^{L}\left (s_{r}^{\left (\displaystyle\sum_{\substack{m=1,\\m\neq r}}^{L} n_m \right )+t}\cdot\dfrac{1}{(z-a_r)^{n_r+t}} \displaystyle\sum_{p=1}^{n_r}\dfrac{p^{(\displaystyle\sum_{m=1}^{L}n_m)+t}}{p!(n_r-p)!} (-1)^{p}\left (\displaystyle\prod_{\substack{m=1,\\m\neq r}}^{L}\left ( \displaystyle\prod_{q=1}^{n_m}\dfrac{1}{qs_m(z-a_r)-ps_r(z-a_m)} \right ) \right ) \right )+$}\\
\hspace*{-2.cm}{\scriptsize $\dfrac{t!}{s^{t}}
\displaystyle\sum_{j=1}^{t}\left [ \left (\dfrac{(-1)^{t-j+1}}{j!(t-j)!}  \right )\sum_{r=1}^{L}\left \{ \frac{(-1)^{n_r}}{s_{r}^{n_r}(a_r-z)^{1+n_r+t}}\sum_{p=0}^{n_r}\left ( \dfrac{(js-ps_r)^{(\displaystyle\sum_{m=1}^{L}n_m)+L-1+t}}{p!(n_r-p)!} (-1)^{p}\left (\displaystyle\prod_{\substack{m=1,\\m\neq r}}^{L}\left ( \displaystyle\prod_{q=0}^{n_m}\dfrac{1}{(js-qs_m)(a_r-z)-(js-ps_r)(a_m-z)}\right ) \right ) \right ) \right \} \right ]$}\\
\newline
\noindent In a more compact and concise form, using auxiliary  functions  $\Psi_{1},\;\Phi_{1},\Phi_{2},\;\Theta_{1},\;\Theta_{2}$ we have respectively
to the above formulas

\begin{enumerate}[(I)]
  \item For any $t\geq 0$ an integer, and for every integer $N$, $N\geq t+1$\\
 $h^{(t)}(z)=\dfrac{t!}{s^{t}}(-1)^{N+t}\displaystyle\sum_{j=1}^{N}\Psi_{1}(j)$. Here for every $1\leq j\leq N$\\ $\Psi_{1}(j):=\dfrac{j^{N-t}}{j!(N-j)!}(-1)^{j} \displaystyle\sum_{r=1}^{L}\Phi_{1}(j,r)$ where for every $1\leq r\leq L$\\
$\Phi_{1}(j,r):=\dfrac{1}{s_{r}^{n_r}(z-a_r)^{n_r+1+t}}\displaystyle\sum_{p=0}^{n_r}\Theta_{1}(j,r,p)$
where for every $0\leq p\leq n_r$ \\
$\Theta_{1}(j,r,p):=\dfrac{(js-ps_r)^{(\displaystyle\sum_{m=1}^{L}n_m)+L-1+t}}{p!(n_r-p)!} (-1)^{p}\displaystyle\prod_{\substack{m=1,\\m\neq r}}^{L}\left ( \displaystyle\prod_{q=0}^{n_m}\dfrac{1}{(z-a_m)(js-ps_r)-(z-a_r)(js-qs_m)} \right )$\\
Thus, considering all the relations above we have 'in one piece'
\begin{equation} \label{six}
h^{(t)}(z)=\dfrac{t!}{s^{t}}(-1)^{N+t}\sum_{j=1}^{N}\left (   \dfrac{j^{N-t}}{j!(N-j)!}(-1)^{j}\sum_{r=1}^{L}\left (\frac{1}{s_{r}^{n_r}(z-a_r)^{n_r+1+t}} \sum_{p=0}^{n_r}\Theta_{1}(j,r,p) \right )  \right )
\end{equation}

with $\Theta_{1}(j,r,p)$ as above. 
  \item For every $t\geq 0$, if $\displaystyle\left (\sum_{m=1}^{L}n_m  \right )+t\geq 1$ then\\
  $h^{(t)}(z)=\dfrac{1}{s^{t}}\left ( \displaystyle\prod_{m=1}^{L}\dfrac{1}{z-a_m} \right )(-1)^{(\displaystyle\sum_{m=1}^{L}n_m )+t}\displaystyle\sum_{\substack{r=1,\\n_r \geq 1}}^{L}\Phi_2(r)+\dfrac{t!}{s^{t}}\displaystyle\sum_{j=1}^{t}\Psi_1(j)$\\
where for every $1\leq r\leq L,\; n_{r}\geq 1$,  $\Phi_2(r):=s_{r}^{\left (\displaystyle\sum_{\substack{m=1,\\m\neq r}}^{n_r} n_m \right )+t}\cdot\dfrac{1}{(z-a_r)^{n_r+t}}\displaystyle\sum_{p=1}^{n_r}\Theta_2(r,p)$,\\
where for every $1\leq p\leq n_r$, $\Theta_{2}(r,p):=\dfrac{p^{(\displaystyle\sum_{m=1}^{L}n_m)+t}}{p!(n_r-p)!} (-1)^{p}\displaystyle\prod_{\substack{m=1,\\m\neq r}}^{L}\left ( \displaystyle\prod_{q=1}^{n_m}\dfrac{1}{qs_m(z-a_r)-ps_r(z-a_m)} \right )$ \\
and $\Psi_1(j)$ is defined like in (\ref{six}) with $t$ instead of $N$ in the two places where $N$ appears. Thus, considering the relations above we have 'in one piece'

\begin{equation} \label{seven}
\begin{aligned}
& h^{(t)}(z)=\dfrac{1}{s^{t}}\left ( \displaystyle\prod_{m=1}^{L} \dfrac{1}{z-a_m}\right )(-1)^{\left ( \displaystyle\sum_{m=1}^{L}n_{m} \right )+t}\displaystyle\sum_{\substack{r=1,\\n_{r}\geq 1}}\left ( s_{r}^{\left (\displaystyle\sum_{{\substack{m=1,\\m\neq r}}}^{L}n_m \right )+t}\cdot \frac{1}{(z-a_{r})^{n_r+t}}\sum_{p=1}^{n_{r}}\Theta_2(r,p) \right ) \\
& +\dfrac{t!}{s^{t}}
\displaystyle\sum_{j=1}^{t}\left ( \frac{(-1)^{j}}{j!(t-j)!}\sum_{r=1}^{L}\left ( \frac{1}{s_{r}^{n_r}(z-a_r)^{n_r+1+t}}\sum_{p=0}^{n_r}\Theta_1(j,r,p) \right ) \right )
\end{aligned}
\end{equation}

Where $\Theta_{1}(j,r,p),\Theta_2(r,p)$ are as above. We note, that the relations explained before (\ref{six}) and (\ref{seven}) are efficient for easy calculation by these formulas respectively, and it is recommended to work out these formulas by these relations.
 \end{enumerate}
\end{theorem}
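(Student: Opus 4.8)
The plan is to prove the single-piece identity (\ref{six}) for every admissible $N\ge t+1$, and then to obtain (\ref{seven}) from it by the reduction carried out in Section 5; so the core task is (\ref{six}). The guiding idea is that all the multiplicity and all the dependence on the order $t$ should be pushed into a finite sum over $j$ that \emph{Euler's Finite Difference Theorem} will evaluate, leaving only derivatives of the elementary kernels $\frac{1}{z-a}$, whose $t$-th derivative $\frac{(-1)^t t!}{(z-a)^{t+1}}$ is immediate. This is precisely what should make the formula ``canonical'': the variable $t$ enters only through exponents produced by Euler's theorem, not through the combinatorial structure.

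I would isolate two algebraic inputs. The first is the partial-fraction (divided-difference) identity: for pairwise distinct nodes $\beta_1,\dots,\beta_K$,
\[
\frac{1}{\prod_{k=1}^{K}(w-\beta_k)}=\sum_{k=1}^{K}\frac{1}{(w-\beta_k)}\prod_{\ell\neq k}\frac{1}{\beta_k-\beta_\ell}.
\]
Taking the nodes to be arithmetic progressions spaced by $s,s_1,\dots,s_L$ and clearing denominators is exactly what manufactures, for each ordered pair $(r,m)$ with $m\neq r$, the bilinear factors $(z-a_m)(js-ps_r)-(z-a_r)(js-qs_m)$ and the pure powers $s_r^{n_r}$, $s_r^{(\sum_{m\neq r}n_m)+t}$ that appear inside $\Theta_1$ and $\Theta_2$. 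The second input is \emph{Euler's Finite Difference Theorem}: $\sum_{j}(-1)^{j}\binom{N}{j}P(j)=0$ whenever $P$ is a polynomial of degree $<N$, together with the known value of the top-order difference. This is the instrument that both annihilates the spurious terms and forces $N$-independence.

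With these in hand the body of the proof is the following chain. I would first introduce the dummy parameter $s$ by writing $h$ through progression nodes, so that $h(z)$ appears as a difference of order $N$ controlled by the weights $\frac{(-1)^{N-j+1}j^{N-t}}{j!(N-j)!}=\frac{(-1)^{N-j+1}}{N!}\binom{N}{j}j^{N-t}$. Differentiating $t$ times in $z$ acts only on the elementary kernels and contributes the prefactor $\frac{t!}{s^{t}}$ together with the exponent shift inside $(js-ps_r)^{(\sum_m n_m)+L-1+t}$. Expanding the resulting rational function of $j$ and applying Euler's theorem should collapse the sum: every monomial in $j$ of degree below the threshold fixed by $N$ drops out, and the surviving contribution is exactly $\sum_r$ of the principal part of $h^{(t)}$ at $a_r$, which is the right-hand side of (\ref{six}). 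Matching pole orders at each $a_r$---order $1+n_r+t$ from $\frac{1}{(z-a_r)^{n_r+1+t}}$ times the single simple factor, against $(n_r+1)+t$ for the $t$-th derivative of a pole of order $n_r+1$---gives a bookkeeping check that the exponents and constants are correct.

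Two points deserve the most care, and I expect the parameter-independence to be the genuine obstacle. The delicate step is to verify that, after Euler's theorem is applied, the answer is literally free of $N$ and of $s,s_1,\dots,s_L$: the theorem guarantees cancellation of terms below the degree threshold, but one must check that the surviving degrees land exactly so that the $s$'s cancel between the numerator powers (such as $s_r^{(\sum_{m\neq r}n_m)+t}$) and the denominators, and that the confluence of the nodes is harmless because each bilinear factor is, by construction, nonvanishing at the chosen point $z$ (this uses the hypotheses $s\neq 0$ and $s_i=0\iff n_i=0$). Once (\ref{six}) is secured, formula (\ref{seven}) follows by specializing the general $N$-form: splitting the range $1\le j\le N$ into the part that reproduces the principal-part block $\sum_{r,\,n_r\ge 1}\Phi_2(r)$ and the residual sum $\frac{t!}{s^{t}}\sum_{j=1}^{t}\Psi_1(j)$, which is precisely the deduction indicated for Section 5.
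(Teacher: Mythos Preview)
The paper as provided does \emph{not} contain a proof of Theorem~3.1. It states the theorem, gives numerical verifications (Section~4), and in Section~5 proves only the auxiliary Proposition~5.1, namely that formula~(\ref{six}) for $t=0$ implies formula~(\ref{seven}) for $t=0$, by letting $s\to 0$ and invoking Euler's Finite Difference Theorem. Remark~3.7 alludes to a proof of Theorem~3.1 in which Euler's theorem is deduced, but no such proof appears in the text. Hence a direct comparison of your approach with ``the paper's own proof'' is not possible.

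Evaluating your proposal on its own terms: you have correctly isolated the two structural ingredients visible in the formulas---a simple-pole partial-fraction expansion over a family of nodes manufactured from the parameters $s,s_1,\dots,s_L$, and Euler's Finite Difference Theorem acting on the outer $j$-sum. Your plan to prove~(\ref{six}) first and then obtain~(\ref{seven}) by the limiting/splitting argument of Section~5 is consistent with what the paper does (for $t=0$). However, two central steps are asserted rather than carried out, and they are precisely where the difficulty lies.

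First, the sentence ``introduce the dummy parameter $s$ by writing $h$ through progression nodes, so that $h(z)$ appears as a difference of order $N$'' does not specify the construction. You need to exhibit an explicit auxiliary product with \emph{simple} distinct factors (built from the $js$, $ps_r$, $qs_m$) whose partial-fraction expansion, after a suitable evaluation, yields exactly the inner $\sum_r\sum_p\Theta_1(j,r,p)$ with the bilinear denominators $(z-a_m)(js-ps_r)-(z-a_r)(js-qs_m)$. Until that object is written down, there is nothing to differentiate and nothing for Euler's theorem to act on.

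Second, your appeal to Euler's theorem presumes that, after performing the inner sums over $r$ and $p$, the quantity $\sum_r\Phi_1(j,r)$ is a \emph{polynomial} in $j$ of degree at most $N-t$ (so that the weighted sum $\sum_j(-1)^j\binom{N}{j}j^{N-t}(\cdots)$ collapses). Term by term this is false: each $\Theta_1(j,r,p)$ is a genuine rational function of $j$, since the denominators $(z-a_m)(js-ps_r)-(z-a_r)(js-qs_m)$ are linear in $j$. The polynomiality, if true, must come from cancellation across $r$ and $p$, and establishing this cancellation is the substantive content of the theorem. Your proposal does not address it. The alternative---that one first proves the identity for a single $N$ (say $N=t+1$) by a different mechanism and then deduces $N$-independence---is also not indicated.

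In short: the ingredients you name are the right ones and match what little the paper reveals, but the proposal is a sketch of intentions rather than a proof; the decisive computational step (the explicit node construction and the verification that Euler's theorem is applicable) is missing.
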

\begin{remark}
 \noindent
\begin{enumerate}[3.1]
\centerline{\uline{\bfseries Notation}}
  \item $h^{(0)}(z)$ means (as common) $h(z)$.
  \item $0^{0}$ is defined to be 1, for example $s_r^{n_r}$ if $n_r=0$.
  \item A sum with empty range is defined to be zero. For example $\displaystyle\sum_{j=1}^{t}\cdots$ if $t=0$ or $\displaystyle\sum_{\substack{r=1\\n_r\geq 1}}^{L} 
 \cdots$ if $n_r=0$ for every  $1\leq r\leq L$.
  \item  A product with empty range is defined to be $1$. For example $\displaystyle\prod_{\substack{m=1\\m\neq r}}^{L} 
 \cdots$ if $L=1$ or $\displaystyle\prod_{\substack{q=1}}^{n_m}\cdots$ if  $n_m=0$.\\
 \centerline{\uline{\bfseries Uniformity}}
 \item If $s_1,...,s_L,s$ are valid  (that is for a certain $z$, the denominators in the formulas for $h^{(t)}(z)$ in Theorem 3.1 are non-zero) for one of the formulas in Theorem 3.1, then they are valid for all other admissible formulas there.\\
 \centerline{\uline{\bfseries {Canonical choice of the dummy parameters}}}
 \item Evidently, almost any choice of $s_1,...,s_L,s$ is valid in the sense that the probability that this choice from the space $\displaystyle\mathbb{C}^{L+1}$ (with uniform distributed probability measure) is valid, is $1$. However, there are certain choices for which one or more of the denominators vanishes. Suppose we want to write a computer program (see Section 6.6) that uses these formulas. Then we must ensure that all the denominators are nonzero. This happens if and only if $\dfrac{a_r-z}{a_m-z}\neq \dfrac{js-ps_r}{js-qs_m} $, for all admissible values of $j,r,m,p,q$ in the formulas (\ref{six}) and (\ref{seven}).\\
 \noindent By elementary considerations it follows that these requirements can be simultaneously fulfilled in the following way.\\ 
 \noindent Set $M=M_z:=\displaystyle\prod_{\substack{1\leq m,r\leq L\\m\neq r}}\left ( 1+\left | \frac{a_r-z}{a_m-z} \right | \right )$ (a product with $L(L-1)$ factors). Now, without loss of generality we can assume that $n_i\geq 1$ for $1\leq i\leq L'$, and that $n_i=0$ for $L'+1\leq i\leq L$.\\
 \noindent Then define $s_{1}=1,\; s_{2}=\left ( 3M \right )n_{1},\; s_{3}=\left ( 3M \right )^{2}n_{2}\cdot n_{1},...,s_{L'}=\left ( 3M \right )^{L'-1}n_{L'-1}\cdot n_{L'-2}\cdots n_{1},\;\\ s=s(z):= \left ( 3M \right )^{L'}n_{L'}\cdot n_{L'-1}\cdots n_{1}$  and of course $s_{L'+1}=s_{L'+2}=\cdots =s_{L}=0$.\\
 \centerline{\uline{\bfseries {Connection to Euler's Finite Difference Theorem}}}
 \item Some terms of the formulas in Theorem 3.1 have some similarity to the terms in Euler's Finite Differences Theorem (see Sections 5 and 6.3).\\
 \noindent Indeed, in the course of the proof of Theorem 3.1 a proof of this theorem of Euler is deduced.\\
\centerline{\uline{\bfseries {Canonical meaning of the formulas in the sense of differentiation}}} 
\item The formulas for $h^{(t)}(z)$ are long (and similar to each other) and also depend on the dummy parameters $s_1,...,s_L,s$ and ion $N$. Nevertheless, they remain in the same form for any valid value of $t$, except for a change in $t$ itself. In this sense, these formulas can be considered as canonical for $h(z)$ in the sense of differentiation.
\end{enumerate}
\end{remark}

\section{Numerical examples of using the formulas of Theorem 3.1}

\noindent 
\noindent We give here a few specific examples, in order to verify the validity of our formulas in different cases. The calculations are detailed in order to make the reading easier. The reader is invited to check the formulas in another cases.
\begin{exmp}
$h(z)=\dfrac{1}{z},\; t=0.$ Here $N=2$ is admissible for the formulas in (\ref{six}).We have $L=1,\; a_{1}=1,\; n_{1}=0$ and we get $h(z)=\dfrac{0!(-1)^{2}}{1}\cdot (-1)^{2}\left ( \Psi_1(j=1)+\Psi_1(j=2) \right )$.\\
$\Psi_1(j=1)=1\cdot (-1)^{1}\Phi_1(j=1,k=1)=-\dfrac{1}{0^{0}(z-0)^{1}}\cdot \Theta_1(j=1,r=1,p=0)=-\dfrac{1}{z}\dfrac{(s-0)^{0}}{0!0!}(-1)^{0}=-\dfrac{1}{z}$.\\
$\Psi_1(j=2)=\frac{2^{2}}{2!0!}(-1)^{2}\Phi_1(j=2,k=1)=2\dfrac{1}{0^{0}(z-0)^{1}}\cdot \Theta_1(j=2,r=1,p=0)=\dfrac{2}{z}\dfrac{(2s-0)^{0}}{0!0!}(-1)^{0}=\dfrac{2}{z}$.\\
Then we get: $\;h(z)=-\dfrac{1}{z}+\dfrac{2}{z}=\dfrac{1}{z}$.\\
Observe that by formula (\ref{seven}) we get $0$, but this formula is not admissible in this case, since  
$\displaystyle\sum_{m=1}^{1}n_m+t=0+0=0$.
\end{exmp}

\begin{exmp}
$h(z)=\dfrac{1}{z^{2}},\; t=0.$ Here $N=1$ is admissible. We have $L=1,\; n_{1}=1$ and we get by (\ref{six}) with some valid $s_1\neq 0$:\\
$h(z)=\dfrac{0!}{s^{0}}(-1)^{1+0}\Psi_1(j=1)=-\frac{1^{1}}{1!0!}(-1)^{1}\Phi_1(j=1,r=1)=\\
\dfrac{1}{s_{1}^{1}(z-0)^{1+1+0}}\left ( \Theta_1(j=1,r=1,p=0)
+\Theta_1(j=1,r=1,p=1) \right )$.\\
$\Theta_1(j=1,r=1,p=0)=\dfrac{(s-0)^{1}}{0!1!}(-1)^{0}\cdot 1=s,\; \Theta_1(j=1,r=1,p=1)=\dfrac{(s-s_1)^{1}}{1!0!}(-1)^{1}=s_{1}-s$ and we get\\
$h(z)=\dfrac{1}{s_{1}\cdot z^{2}}\left ( s+s_{1}-s \right )=\dfrac{1}{z^{2}}$, as required.\\
We can also use for this case formula (\ref{seven}). Here the second addend is $0$ and we get\\
$h(z)=h^{0}(z)=\dfrac{1}{s^{0}}\cdot\dfrac{1}{z}\cdot (-1)^{1+0}\cdot \Phi_2(1)=-\dfrac{1}{z}\cdot s_{1}^{0}\dfrac{1}{(z-0)^{1}}\cdot \Theta_2(r=1,p=1)=-\dfrac{1}{z^{2}}\cdot \dfrac{1^{1}}{1!0!}(-1)^{1}\cdot 1=\dfrac{1}{z^{2}}$\\
Here, in accordance with Remark (3.4) the double product in $\Theta_2(r=1,p=1)$ is taken to be $1$.
\end{exmp}
\noindent The rest of the examples are for specific values of $z$ and the parameters. This in order to simplify calculations.
\begin{exmp}
$h(z)=\dfrac{1}{z(z-2)^{2}}$ with $t=1,\; z=1$. Here $L=2,\; a_{1}=0,\; n_{1}=0$ (and thus $s_{1}=0$),\;$a_{2}=2, n_{1}=1$ and we take $s_{2}=-1$. We use formula from (\ref{six}) with $N=2$ to calculate $h'(1)$. We also take $s=s(z)=s(1)=3$.
As we shall see, the choice of $s,\;s_{1}$ is valid. We have\\
$h'(1)=\dfrac{1!}{3^{1}}(-1)^{2+1}\left ( \Psi_1(j=1)+\Psi_1(j=2) \right )$.\\
$ \Psi_1(j=1)=1\cdot (-1)\left ( \Phi_1(j=1,r=1) +\Phi_1(j=1,r=2)\right )$.\\
$\Phi_1(j=1,r=1)=\dfrac{1}{0^{0}(1-0)^{2}} \cdot \Theta_1(j=1,r=1,p=0)=\dfrac{1}{1^{2}}\dfrac{(3-0)^{3}}{0!0!}(-1)^{0}\cdot \dfrac{1}{(1-2)(3-0)-(1-0)(3-0)}\cdot \dfrac{1}{(1-2)(3-0)-(1-0)(3+1)}=27\cdot \left (-\dfrac{1}{6}  \right )\left (-\dfrac{1}{7}  \right )=\dfrac{9}{14}$.\\
$\Phi_1(j=1,r=2)=\dfrac{1}{(-1)^{1}(1-2)^{3}}\left ( \Theta_1(j=1,r=2,p=0)+\Theta_1(j=1,r=2,p=1) \right )$.\\
$\Theta_1(j=1,r=2,p=0)=\dfrac{(3-0)^{3}}{0!1!}(-1)^{0}\cdot\dfrac{1}{(1-0)(3-0)-(1-2)(3-0)}=27\cdot \dfrac{1}{6}=\dfrac{9}{2}$.\\
$\Theta_1(j=1,r=2,p=1)=\dfrac{(3+1)^{3}}{1!0!}\cdot (-1)^{1}\cdot \dfrac{1}{(1-0)(3+1)-(1-2)(3-0)}=64\cdot(-1)\cdot\dfrac{1}{7}=-\dfrac{64}{7}$. Then we get $\Phi_1(j=1,r=2)=1\cdot \left ( \dfrac{9}{2}-\dfrac{64}{7} \right )=-\dfrac{65}{14}$ and $\Psi_1(j=1)=-\left ( \dfrac{9}{14}-\dfrac{65}{14} \right )=4$.\\ Now to $\Psi_1(j=2)$:\\
$\Psi_1(j=2)=\dfrac{2^{1}}{2!0!}(-1)^{2}\left ( \Phi_1(j=2,r=1)+\Phi_1(j=2,r=2) \right )$.\\
$\Phi_1(j=2,r=1)=\dfrac{1}{0^{0}(1-0)^{2}}\cdot\Theta_1(j=2,r=1,p=0)=\dfrac{(6-0)^{3}}{0!0!}(-1)^{0}\cdot \dfrac{1}{(1-2)(6-0)-(1-0)(6-0)}\cdot \dfrac{1}{(1-2)(6-0)-(1-0)(6+1)}=216\cdot \dfrac{1}{(-12)}\cdot\dfrac{1}{(-13)}=\dfrac{18}{13} 
 $.\\
$\Phi_1(j=2,r=2)=\dfrac{1}{(-1)^{1}(1-2)^{3}}\left (\Theta_1(j=2,r=2,p=0)+\Theta_1(j=2,r=2,p=1)  \right )
$.\\
$\Theta_1(j=2,r=2,p=0)=\dfrac{(6-0)^{3}}{0!1!}(-1)^{0}\cdot \dfrac{1}{(1-0)(6-0)-(1-2)(6-0)}=216\cdot \dfrac{1}{12}=18$.\\
$\Theta_1(j=2,r=2,p=1)=\dfrac{(6+1)^{3}}{0!1!}(-1)^{1}\cdot \dfrac{1}{(1-0)(6+1)-(1-2)(6-0)}= -\dfrac{343}{13}$.\\
Then $\Phi_1(j=2,r=2)=18-\dfrac{343}{13}$ and $\Psi_1(j=2)=\left ( \dfrac{18}{13}+18-\dfrac{343}{13} \right )=-7$ and $h'(1)=-\frac{1}{3}(4-7)=1$. (This is true!)\\
The reader is invited to make the calculation with different valid values of the dummy parameters $s_2, s$.  Also, further work of calculations will bring us to the general formula for $h'(z)$.\\
Let us calculate again $h'(1)$. This time by (\ref{seven}), with $s_{2}=-6,\;s=s(1)=1$. We then get:\\
$h'(1)=\dfrac{1}{1^{1}}\cdot \dfrac{1}{1-0}\cdot \dfrac{1}{1-2}\cdot(-1)^{2}\cdot\Phi_2(r=2)+\dfrac{1!}{1^{1}}\Psi_1(j=1)$.\\
$\Phi_2(r=2)=(-6)^{1}\cdot \dfrac{1}{(1-2)^{2}}\Theta_2(r=2,p=1)=-6\cdot \dfrac{1^{1+1}}{1!0!}(-1)^{1}\cdot 1=6$.\\
$\Psi_1(j=1)=\dfrac{1^{0}}{1!0!}(-1)^{1}\left ( \Phi_1(j=1,r=1)+\Phi_1(j=1,r=2) \right )$.\\
$ \Phi_1(j=1,r=1)=\dfrac{1}{0^{0}(1-0)^{2}}\cdot \Theta_1(j=1,r=1,p=0)=1\cdot \dfrac{(1-0)^{3}}{0!0!}(-1)^{0}\cdot \dfrac{1}{(1-2)(1-0)-(1-0)(1-0)}\cdot\dfrac{1}{(1-2)(1-0)-(1-0)(1+6)}=1\cdot \dfrac{1}{(-2)}\cdot \dfrac{1}{(-8)}=\dfrac{1}{16}$.\\
$ \Phi_1(j=1,r=2)=\dfrac{1}{(-6)^{1}(1-2)^{3}}\cdot \left (\Theta_1(j=1,r=2,p=0)+ (\Theta_1(j=1,r=2,p=1)  \right )$.\\
$\Theta_1(j=1,r=2,p=0)=\dfrac{(1-0)^{3}}{0!1!}\cdot (-1)^{0}\cdot \dfrac{1}{(1-0)(1-0)-(1-2)(1-0)}=\dfrac{1}{2}$.\\
$\Theta_1(j=1,r=2,p=1)=\dfrac{(1+6)^{3}}{1!0!}\cdot (-1)^{1}\cdot \dfrac{1}{(1-0)(1+6)-(1-2)(1-0)}=\dfrac{343\cdot (-1)}{8}=-\dfrac{343}{8}$.\\
Then $\Phi_1(j=1,r=2)=\dfrac{1}{6}\left ( \dfrac{1}{2}-\dfrac{343}{8} \right )=-\dfrac{113}{6}$ and then 
$\Psi_1(j=1)=-\left ( \dfrac{1}{16}-\dfrac{113}{16} \right )=7$ and $h'(1)=-6+7=1$ as expected. Observe, that we cannot use, for example, the value we got for $\Phi_1(j=1,r=1)$ from the calculation by (\ref{six}), since the values of $s_{2}, s$ changed.\\
In Example 1 we showed that formula (\ref{seven}) does not work if $\displaystyle\left (\sum_{m=1}^{L}n_m  \right ) +t=0$. Let us show now a case in which the formulas of (\ref{six}) do not work when $N$ is out of the valid range. We demonstrate it with $h(z)=\dfrac{1}{z^{2}},\; t=1$ and take $N=t=1$. Here $L=1,\; a_{1}=0,\; n_{1}=1$ and we take $s_{1}=7$ and calculate for $z=2$ with $s=s(2)=2$.We then get by (\ref{six})\\
$\dfrac{1!}{2^{1}}(-1)^{2}\cdot \dfrac{1^{0}}{1!0!}(-1)^{1}\cdot \dfrac{1}{7^{1}(2-0)^{3}}\cdot \left ( \Theta_1(j=1,r=1,p=0)+\Theta_1(j=1,r=1,p=1) \right )$.\\
$\Theta_1(j=1,r=1,p=0)=\dfrac{(2-0)^{2}}{0!1!}(-1)^{0}=4$.\\
$\Theta_1(j=1,r=1,p=1)=\dfrac{(2-7)^{2}}{1!0!}(-1)^{1}=-25$ and we get in total $-\dfrac{1}{7\cdot 16}\left ( 4-25 \right )=\dfrac{3}{16}\neq -\dfrac{1}{4}=h'(2)$.

\end{exmp}

\section{Connections between the formulas in Theorem 3.1}

\noindent There is a strong connection between formulas (\ref{six}) and (\ref{seven}). We choosed to demonstrate it by showing how formula (\ref{six}) for the case $t=0$ implies formula (\ref{seven}) for the case $t=0$. For this we also need the following known theorem of Euler.
\begin{theorem*} 
\noindent Let $n,p$ be integers $0\leq p\leq n$. Then\\
$$\displaystyle\sum_{j=0}^{n}(-1)^{j}\binom{n}{j}j^{p}=\left\{\begin{matrix}
0 & \; \; \; \: \: \: \: 0\leq p\leq n\\ 
(-1)^{n}\cdot n! &p=n 
\end{matrix}\right.$$
\end{theorem*}
\noindent Observe that when $p>0$ then $j=0$ can be omitted. When $p=0$ we get the known fact that the alternate sum of the binomial coefficients is zero (here $0^{0}=1$).
\begin{proposition}
Formula (\ref{six}) for the case $t=0$ with every admissible $N$ implies formula (\ref{seven}) for the case $t=0$.
\end{proposition}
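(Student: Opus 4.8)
The plan is to put $t=0$ into formula (\ref{six}) and rewrite it, using $\frac{1}{j!(N-j)!}=\frac{1}{N!}\binom{N}{j}$, as $h(z)=\frac{(-1)^{N}}{N!}\sum_{j=1}^{N}(-1)^{j}\binom{N}{j}\,j^{N}\,G(j)$, where $G(j):=\sum_{r=1}^{L}\frac{1}{s_r^{n_r}(z-a_r)^{n_r+1}}\sum_{p=0}^{n_r}\Theta_1(j,r,p)$ gathers the $r$- and $p$-summations (for fixed $z$ and parameters this is a rational function of the summation index $j$). Since (\ref{six}) holds for every admissible choice of the dummy parameters while its left side $h(z)$ is independent of them, the idea is to let $s\to 0$ and read off (\ref{seven}) as the limiting identity, disposing of the remaining $j$-summation by Euler's Finite Difference Theorem.

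First I would compute $\lim_{s\to 0}G(j)$ termwise. In the $(r,p)$-summand the base $(js-ps_r)$ of $\Theta_1$ tends to $-ps_r$, and each denominator $(z-a_m)(js-ps_r)-(z-a_r)(js-qs_m)$ tends to $qs_m(z-a_r)-ps_r(z-a_m)$ for $q\ge 1$ and to $-ps_r(z-a_m)$ for $q=0$ — precisely the denominators of $\Theta_2(r,p)$. The decisive bookkeeping is with the powers of $s$: in every $p=0$ term the numerator $(js)^{(\sum_m n_m)+L-1}$ balanced against the $L-1$ factors coming from $q=0$ leaves a net factor $s^{\sum_m n_m}$, so all $p=0$ terms die in the limit because $\sum_m n_m\ge 1$; for $p\ge 1$ the $q=0$ factors instead recombine to produce $\prod_{m}\frac{1}{z-a_m}$ and, together with $(-ps_r)^{(\sum_m n_m)+L-1}$, the sign $(-1)^{\sum_m n_m}$, the power $p^{\sum_m n_m}$, and $s_r^{\sum_{m\ne r}n_m}$, while $(z-a_r)^{-(n_r+1)}$ splits off $(z-a_r)^{-n_r}$. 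Matching the surviving constants shows that $\lim_{s\to0}G(j)$ is exactly the single sum appearing in (\ref{seven}) at $t=0$.

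The structural key is that this limit is \emph{independent of $j$}: once $s\to 0$, both the numerator $(-ps_r)^{\cdots}$ and all denominators have lost their $j$-dependence. Writing $\widetilde G:=\lim_{s\to0}G(j)$, the right side of (\ref{six}) at $t=0$ tends to $\widetilde G\cdot\frac{(-1)^N}{N!}\sum_{j=1}^{N}(-1)^j\binom{N}{j}j^N$. The $j=0$ term is $0$ (as $N\ge1$), so the sum equals $\sum_{j=0}^{N}(-1)^j\binom{N}{j}j^N$, which by Euler's Finite Difference Theorem (the case $p=n=N$) equals $(-1)^N N!$; hence the $j$-sum collapses to the factor $1$. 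Therefore the limit of (\ref{six}) at $t=0$ is $\widetilde G$, i.e. exactly (\ref{seven}) at $t=0$ (whose second, $\sum_{j=1}^{0}$, sum is empty by Remark 3.3), and since the left side is constantly $h(z)$ this establishes (\ref{seven}) at $t=0$.

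The step I expect to be the main obstacle is the limit computation of the second paragraph: tracking the exact powers of $s$ and the signs so as to see \emph{simultaneously} that every $p=0$ contribution vanishes — this is where the admissibility hypothesis $\sum_m n_m\ge 1$ enters — and that the $p\ge 1$ contributions reassemble into $\Theta_2(r,p)$ with the precise prefactors of (\ref{seven}). Some care is also needed with the conventions $0^0=1$ and empty products/sums (Remarks 3.2--3.4), for instance when $n_r=0$ or $L=1$, and with interchanging the finite $j$-, $r$-, $p$-summations with the limit $s\to0$, which is legitimate because every summand has a finite limit.
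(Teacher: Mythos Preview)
Your proposal is correct and follows essentially the same approach as the paper's proof: set $t=0$ in (\ref{six}), let the dummy parameter $s\to 0$, observe that the $p=0$ terms vanish (using $\sum_m n_m\ge 1$), that the $p\ge 1$ terms converge to the $\Theta_2$-expression with the required prefactors, that the resulting inner sum is independent of $j$, and then collapse the $j$-summation via Euler's Finite Difference Theorem. The only cosmetic difference is that you rewrite $\frac{1}{j!(N-j)!}$ as $\frac{1}{N!}\binom{N}{j}$ before invoking Euler, whereas the paper applies the identity directly in the form $(-1)^N\sum_{j=1}^N\frac{j^N}{j!(N-j)!}(-1)^j=1$.
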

 
\begin{proof}
We apply (\ref{six}) with some $N,\;N \geq 0+1$. We separate $\displaystyle\sum_{p=0}^{n_{r}}\Theta_1(\cdot )$ to the addend of $p=0$ and to $\displaystyle\sum_{p=1}^{n_{r}}\Theta_1(\cdot )$. We then have\\ 
$h(z)=h^{0}(z)=(-1)^{N}\displaystyle\sum_{j=1}^{N}  \left ( \dfrac{j^{N}}{j!(N-j)!}(-1)^{j}\displaystyle\sum_{r=1}^{L}\left ( \frac{1}{s_{r}^{n_r}(z-a_r)^{n_r+1}}\left ( \Theta_1(j,r,0)+\displaystyle\sum_{p=1}^{n_r}\Theta_1(j,r,p) \right ) \right )  \right )$ 
\noindent Recall that the parameter $s$ is \textit{hidden} inside the $\Theta_1(\cdot )$'s. Since the denominators in (\ref{seven}) are nonzero by the choice of the parameters, we can let $s \to 0$. Since $\displaystyle\sum_{m=1}^{L}n_m \geq 1$ we get that $\Theta_1(j,r,0) \underset{s\to 0 }{\rightarrow 0}$ for every relevant $j,r$ (consider especially the case when $q=0$ in the double product in $\Theta_1(j,r,0)$). For $1\leq p\leq n_r$, we separate in the double product of $\Theta_1(j,r,p)$ the factors of $q=0$ from others. We get that the limit of that double product is \\
$\left (\displaystyle\prod_{\substack{m=1\\m\neq r}}^{L}\left ( \displaystyle\prod_{q=1}^{n_m} \dfrac{1}{(z-a_m)(-ps_r)-(z-a_r)(-qs_m)} \right )  \right )\displaystyle\prod_{\substack{m=1\\m\neq r}}^{L}\left ( \dfrac{1}{(z-a_m)(-ps_r)}\right )$. Note that we consider here only $r$ such that $n_r \geq 1$ (see Remark 3.3), so $s_r \neq 0$ in the last product. Also, by the choice of the parameters, the denominators here are nonzero and the limit  indeed exists.\\
Overall we get in the limit that:\\ 
$\Theta_1(j,r,p) \underset{s\to 0 }{\rightarrow }\dfrac{(-ps_r)^{\left ( \displaystyle\sum_{m=1}^{L}n_m \right )+L-1}}{p!(n_r-p)!}\cdot (-1)^{p}\cdot \dfrac{1}{(-ps_r)^{L-1}}\displaystyle\prod_{\substack{m=1\\m\neq r}}^{L}\left ( \frac{1}{z-a_m} \right )\left (\displaystyle\prod_{\substack{m=1\\m\neq r}}^{L}\left ( \displaystyle\prod_{q=1}^{n_m} \frac{1}{qs_m(z-a_r)-ps_r(z-a_m)} \right )  \right )$\\
$=(-1)^{\displaystyle\sum_{m=1}^{L}n_m}\cdot \dfrac{p^{\displaystyle\sum_{m=1}^{L}n_m}}{p!(n_r-p)!}\cdot (-1)^{p}s_r^{\displaystyle\sum_{m=1}^{L}n_m}\left (\displaystyle\prod_{\substack{m=1\\m\neq r}}^{L}\left ( \displaystyle\prod_{q=1}^{n_m} \dfrac{1}{qs_m(z-a_r)-ps_r(z-a_m)} \right )  \right )\displaystyle\prod_{\substack{m=1\\m\neq r}}^{L} \frac{1}{z-a_m}$\\
$=(-1)^{\displaystyle\sum_{m=1}^{L}n_m}\cdot s_r^{\displaystyle\sum_{m=1}^{L}n_m}\displaystyle\left (\prod_{\substack{m=1\\m\neq r}}^{L} \frac{1}{z-a_m}   \right )\Theta_2(r,p)$, and this is of course the exact value of $\Theta_1(j,r,p)$, since as long as the parameter $s$ is valid its value does not matter. We then get:\\
$h(z)=(-1)^{N}\displaystyle\sum_{j=1}^{N}  \left ( \dfrac{j^{N}}{j!(N-j)!}(-1)^{j}\displaystyle\sum_{r=1}^{L}\left ( \frac{1}{s_{r}^{n_r}(z-a_r)^{n_r+1}}\displaystyle\sum_{p=1}^{n_r}\left ((-1)^{\displaystyle\sum_{m=1}^{L}n_m}\cdot s_r^{\displaystyle\sum_{m=1}^{L}n_m}\displaystyle\left (\prod_{\substack{m=1\\m\neq r}}^{L} \dfrac{1}{z-a_m}   \right )\Theta_2(r,p) \right ) \right )  \right ) $\\
$=(-1)^{N}\displaystyle\sum_{j=1}^{N}  \left( \dfrac{j^{N}}{j!(N-j)!}(-1)^{j}\displaystyle\sum_{r=1}^{L}\left ( \frac{1}{(z-a_r)^{n_r}}\left (\displaystyle\prod_{m=1}^{L} \frac{1}{z-a_m} \right )(-1)^{\displaystyle\sum_{m=1}^{L}n_m}\cdot s_r^{\displaystyle\sum_{\substack{m=1\\m\neq r}}^{L}n_m}\displaystyle\sum_{p=1}^{n_r} \Theta_2(r,p)  \right ) \right )
$\\
We got rid of the dependence in $s$. It caused also to get rid of the dependence in $j$ inside $\displaystyle\sum_{r=1}^{L}\left ( \cdots \right )$, and so we can change the order of the summations 
$\displaystyle\sum_{j=1}^{N}  \left ( \cdots \right )$ and $\displaystyle\sum_{r=1}^{L}\left ( \cdots \right )$, or more accurately to consider $\displaystyle\sum_{r=1}^{L}\left ( \cdots \right )$ as a constant with respect to $\displaystyle\sum_{j=1}^{N}  \left ( \cdots \right )$.\\
Now, by Euler's Finite Difference Theorem $(-1)^{N}\displaystyle\sum_{j=1}^{N}\dfrac{j^{N}}{j!(N-j)!}=1 $ and we get \\
$ h(z)=\left(\displaystyle\prod_{m=1}^{L} \dfrac{1}{z-a_m} \right )(-1)^{\displaystyle\sum_{m=1}^{L}n_m}\cdot\displaystyle\sum_{r=1}^{L} s_r^{\displaystyle\sum_{\substack{m=1\\m\neq r}}^{L}n_m}\cdot\dfrac{1}{(z-a_r)^{n_r}}\displaystyle\sum_{p=1}^{n_r} \Theta_2(r,p)$ \\

\noindent Of course we consider here only values of the index $r$ for which $n_r \geq 1$.
\noindent Now, since the second main addend in (\ref{seven}) is zero when $t=0$ (empty summation), we get exactly the formula (\ref{seven}) for $t=0$, as desired. 

\end{proof}

\section{Applications of Theorem 3.1}

\subsection{Interpolation}
\noindent An important application we have found for Theorem 3.1 is an explicit and $\textit{elementary}$ formula for the solution of the general interpolation problem of Hermite.
\noindent Let us first introduce the problem.\\

\noindent Let $a_{1},a_{2},...,a_{L}$ be different points in $\mathbb{C}$, and $n_{1},n_{2},...n_{L}$ non-negative integers. For each $1\leq i\leq L$ corresponds $n_{i}+1$ numbers $A_{0}^{(i)},A_{1}^{(i)},...,A_{n_{i}}^{(i)}$. Find a polynomial $p(z)$ of minimal degree such that  $p^{(l)}\left ( a_{i} \right )=A_{l}^{(i)}$, for every $1\leq i\leq L$, $0\leq l\leq n_{i}$. It is known by the theory of interpolation that there exist exactly one such polynomial of degree at most $n=\displaystyle\sum_{i=1}^{L}n_{i}+L-1$. The first case of this interpolation problem that was solved is the case where $n_{r}=0$ for $1\leq r\leq L$, and is due to Lagrange. Then the required polynomial is $p(z)=\displaystyle\sum_{i=1}^{L}l_{L,i}(z) \cdot A_{0}^{(i)}$, where 
\begin{equation} \label{eight}
l_{L,i}(z)=\displaystyle\prod_{\substack{i=1\\j\neq i}}^{L} \dfrac{z-a_{j}}{a_{i}-a_{j}},
\end{equation}
(see \cite[p. 7]{Kaniel},\cite{Spitzbart}).

\noindent Hermite (\cite[p. 11]{Kaniel},\cite{Spitzbart}) extended the solution to the case that $1=n_{1}=n_{2}=...=n_{L}$, and got the minimal polynomial of degree at most $2L-1$, $p(z)=\displaystyle\sum_{i=1}^{L}\varphi_{L,i}(z)A_{0}^{(i)}+\displaystyle\sum_{i=1}^{L}\psi_{L,i} (z)A_{1}^{(i)}$, where for each $1\leq i\leq L$, $\varphi_{L,i}$ and $\psi_{L,i}$ are defined as follows (see  \cite[p. 12]{Kaniel})
$$\varphi_{L,i}(z)=\left ( 1-2l'_{L,i}(a_i)(z-a_{i}) \right )l_{L,i}^{2}(z),$$
$$\psi_{L,i}(z)=\left ( z-a_{i} \right )^{2}l_{L,i}^{2}(z).$$

\noindent Spitzbart \cite{Spitzbart}, introduced formula for the general case as follows. Define for each $1\leq i\leq L$ a polynomial $$p_{i}(z):=(z-a_{1})^{n_{1}+1}\cdot(z-a_{2})^{n_{2}+1}\cdots(z-a_{i-1})^{n_{i-1}+1}\cdot(z-a_{i+1})^{n_{i+1}+1}\cdot(z-a_{i+2})^{n_{i+2}+1}\cdots(z-a_{L})^{n_{L}+1},$$ 
and set $g_{i}(z)=\dfrac{1}{p_{i}(z)}$. Then the solution to the interpolation problem is $p(z)=\displaystyle\sum_{i=1}^{L}\displaystyle\sum_{l=0}^{n_{i}}Q_{i,l}(z)A_{l}^{(i)}$, where $Q_{i,l}(z)=p_{i}(z)\cdot \dfrac{(z-a_{i})^{l}}{l!}\displaystyle\sum_{t=0}^{n_{i}-l}\dfrac{g_{i}^{(t)}(a_{i})}{t!}(z-a_{i})^{t}$.

\noindent This is an explicit formula, but not elementary, since the values $g_{i}^{(t)}$ are yet to be calculated. Theorem 3.1 gives an elementary formulas exactly for this need. It seems that the $\textit{'minimal'}$
case of the formulas (\ref{six}) i.e. with $N=t+1$ or formula (\ref{seven}) are most simple for this case
(and in general to calculate some high order derivative).

\subsection{General identities}
\noindent Over the years,there were made attempts to find easy formulas for solution of the general interpolation problem. In \cite[pp. 306-308]{LanTis} and in \cite{SakVer} the solution is expressed by recursive process. In \cite{HicYang} $p(z)$ is expressed by what known as \textit{cycle index polynomials} of certain symmetric groups. In \cite{KeDeTsPe} they got the formula $p(z)=\displaystyle\sum_{i=1}^{L}\displaystyle\sum_{l=0}^{n_i}Z_{i}\left ( I_{n_{i}+1}-\Lambda_{i}  \right )^{l} A_{i}$, where each $Z_{i}$ is a row vector depends on the variable $z$ of size $n_{i}+1$, and $\Lambda_{i}$ is a lower triangular matrix of constants of size $(n_{i}+1)\times (n_{i}+1)$. It is possible to relate our formulas (\ref{six}) and (\ref{seven}) to all these formulas mentioned above to get some identities. Of course we shall not detail here these matters. 

\subsection{Combinatorial identities}

\noindent By taking $h(z)=\dfrac{1}{z}$  in any of the many formulas in (\ref{six}) or in (\ref{seven}), the case $p=n$ in Euler's Finite Difference Theorem follows easily. In addition, fixing some function $h(z)$ of the form (\ref{five}) and some $z=z_{0}$, the right hands of formulas (\ref{six}) and (\ref{seven}) are constant functions of the $L'+1$ variables $s_1,...,s_{L'},s$ (see Remarks 3.1, 3.6). Hence any partial derivative of these expressions with respect to any of these variables is zero. This gives the ability to produce many combinatorial identities. Other combinatorial identities can be obtained by comparing any two of the formulas, for the same value of $t$, the same function $h$ and the same point $z$. The residue Theorem can also be used. For a certain function $h(z)$ of the type (\ref{five}), the only poles are $a_{1},a_{2},...,a_{L}$. But a typical pole of the right hand of the formulas in (\ref{six}) or (\ref{seven}) is a zero $z$ of some denominator, that is some $z$ for which
\begin{equation} \label{nine}
(js-qs_{m})(a_{r}-z)-(js-ps_{r})(a_{m}-z)=0 \;,
\end{equation}
or
\begin{equation} \label{ten}
 z=\frac{(js-qs_{m})a_{r}-(js-ps_{r})a_{m}}{ps_{r}-qs_{m}} \;,
\end{equation}
for the relevant values of $j,p,q,r,m$.

\noindent Thus, the sum of coefficients of any term as in (\ref{nine}) must be zero, otherwise $h$ would have an additional pole, except $a_{1},a_{2},...,a_{L}$. This is true even if the expression in (\ref{ten}) is equal to some $a_{i},\;1\leq i \leq L$, by the uniqueness theorem for meromorphic functions.  

\noindent We can also differentiate each of the formulas for $h^{(t)}(z)$ with respect to $z$ and get a formula for $h^{(t+1)}(z)$. Of course, checking all these possibilities, or better to say - selecting the
reasonable ones among them is an hard mission.

\subsection{Integration and derivatives of general rational function}
\noindent For a comprehensive text about the material of this section, the reader is referred to \cite[pp. 284-290]{Meizler} and \cite[pp. 243-269]{Landau}.
\noindent It is known that $h(z)$ as in (\ref{five}) can be represented as a finite sum of partial fractions as follows:
\begin{equation} \label{eleven}
h(z)=\displaystyle\sum_{i=1}^{L}\displaystyle\sum_{j=1}^{n_i+1}\dfrac{B_{j}^{(i)}}{(z-a_i)^{j}}\;,
\end{equation}
where the ${B_{j}^{(i)}}$'s are uniquely determined constants. Finding these constants is involved with reconstructing the common denominator of the partial fractions in (\ref{eleven}) (which is the denominator of $h$ from (\ref{five})). That leads to use the method of comparing of coefficients. It is known and easy to verify that each ${B_{j}^{(i)}}$ is given by
\begin{equation} \label{twelve}
{B_{j}^{(i)}}=\dfrac{\left ( (z-a_i)^{n_i+1}h(z) \right )^{(n_i+1-j)}}{(n_i+1-j)!}(a_i)
\end{equation}
Now, in this relation, the function that we have to differentiate, $(z-a_i)^{n_i+1}h(z)$, is exactly of the form (\ref{five}), and $a_i$ is not a pole of it. So we can express explicitly (and elementary) ${B_{j}^{(i)}}$ by the formulas (\ref{six}) and (\ref{seven}). After we have the constants ${B_{j}^{(i)}}$ , the representation (\ref{eleven}) of $h(z)$ enables us to calculate explicitly a primitive function of $h$. It is also very useful to calculate a complex integral $\dint_{\gamma}h(\zeta)d\zeta$ for some closed curve $\gamma$, that does not contain any point $a_i,\; 1\leq i\leq L$. Indeed, we know all the residues, 
${B_{1}^{(i)}},\;1\leq i\leq L$.
\noindent A similar representation to (\ref{eleven}) can be constructed for any rational function $H(z)$. Indeed, let $H(z)=\dfrac{p(z)}{q(z)}$, where $p,q$ are polynomials and $q(z)=(z-a_{1})^{n_{i}+1}\cdot(z-a_{2})^{n_{2}+1}\cdots(z-a_{L})^{n_{L}+1}$ where as usual each $n_i$ is a non-negative integer.
\noindent  By dividing in \textit{long division} $p$ by $q$, we can write $H$ as $H(z)=b(z)+\dfrac{\hat{p}(z)}{q(z)}$, where $b,\hat{p}$ are polynomials with $|\hat{p}|<|q|$. Then the rational function $\hat{h}(z)=\dfrac{\hat{p}(z)}{q(z)}$ can also be written as in (\ref{eleven}), say 
$\hat{h}(z)=\displaystyle\sum_{i=1}^{L}\displaystyle\sum_{j=1}^{n_i+1}\dfrac{\hat{B}_{j}^{(i)}}{(z-a_i)^{j}}$, where the $\hat{B}_{j}^{(i)}$'s are constants, and similarly to (\ref{twelve}) we have: 
\begin{equation} \label{thirteen}
{\hat{B}_{j}^{(i)}}=\dfrac{\left ( (z-a_i)^{n_i+1}\hat{h}(z) \right )^{(n_i+1-j)}}{(n_i+1-j)!}(a_i)\\
\end{equation}
for each $1\leq i\leq L,\;1\leq j\leq n_i+1$. So we have to find here the derivative of order $n_i+1-j$ of the function $\left (\displaystyle\prod_{\substack{k=1\\k\neq i}}^{L}\dfrac{1}{(z-a_k)^{n_k+1}}   \right )\hat p(z)$ at $a_i$, and we can find it explicitly by the formulas in (\ref{six}) and (\ref{seven}), and by Leibniz rule (\ref{four}).

\noindent In general, if $R(z)=\displaystyle\frac{P(z)}{Q(z)}$ is a rational function, where $\displaystyle\frac{1}{Q}$ is of the form (\ref{five}), then for $t\geq 0$ we have by (\ref{four}) $R^{(t)}(z)=\displaystyle\sum_{j=0}^{t}\binom{t}{j}P^{(j)}\left (1/Q  \right )^{(t-j)}(z)$. Now, if $|P|$ is the degree of $P$ and $j>|P|$, then $P^{(j)}(z)\equiv0$. Hence, if $t\geq |P|$, the above summation runs only for $j=0,1,2,...,P$ and then by (\ref{six}) and (\ref{seven}) we get explicit formulas for 
$R^{(t)}(z)$.

\subsection{Recursive sequences}

\noindent We want to construct certain infinite sequence $\left \{ b_n \right \}_{n=0}^{\infty }$ in a recursive way. First let us choose 4 complex numbers: $b_0, b_1,b_2, b_3$ (the first four elements of the sequence). In addition let $c_0, c_1, c_2, c_3$  be the other fixed 4 complex numbers, with $c_3\neq 0$, and let be $b_5, b_6, b_7,...$ be defined by the rule:
\begin{equation} \label{fourteen}
b_{n+4}=c_{0}b_{n+3}+c_{1}b_{n+2}+c_{2}b_{n+1}+c_{3}b_{n},\; n\geq 0
\end{equation}
We then get an infinite sequence (that is some kind of a generalization to the well known Fibonacci's sequence $b_{n+2}=b_{n+1}+b_{n}$). Let us show how to find an explicit formula to the elements $\left \{ b_n \right \}_{n=4}^{\infty }$. For this purpose consider the series:
\begin{equation} \label{fifteen}
\displaystyle\sum_{n=0}^{\infty}b_{n}z^{n}
\end{equation}
(see \cite[p. 184 Ex. 5]{Ahlfors} for comparison). We need 
\begin{lemma}
The series (\ref{fifteen}) has a positive radius of convergence at $z_0=0$.
\end{lemma}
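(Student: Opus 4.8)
The plan is to show that the sequence $\{b_n\}$ grows at most geometrically, from which the positivity of the radius of convergence follows at once by the Cauchy--Hadamard formula. Concretely, I would produce constants $C>0$ and $R>0$ such that $|b_n|\leq C R^{n}$ for every $n\geq 0$, and then conclude that the radius of convergence of (\ref{fifteen}) is at least $1/R>0$.

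First I would set $M:=\max(|c_0|,|c_1|,|c_2|,|c_3|)$ and choose $R:=\max(1,4M)$. Taking absolute values in the recurrence (\ref{fourteen}) and applying the triangle inequality gives $|b_{n+4}|\leq M\left(|b_{n+3}|+|b_{n+2}|+|b_{n+1}|+|b_n|\right)$ for all $n\geq 0$. Next I would fix $C:=\max\{|b_0|,\,|b_1|/R,\,|b_2|/R^{2},\,|b_3|/R^{3}\}$, so that the four base cases $|b_k|\leq CR^{k}$ hold for $k=0,1,2,3$.

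Then I would argue by strong induction on $n$: assuming $|b_m|\leq CR^{m}$ for all $m\leq n+3$, the inequality above yields
$$|b_{n+4}|\leq M\,C\,(R^{n+3}+R^{n+2}+R^{n+1}+R^{n})\leq M\,C\cdot 4R^{n+3}=4M\,C\,R^{n+3}\leq C\,R^{n+4},$$
where the middle inequality uses $R\geq 1$ (so $R^{n+2},R^{n+1},R^{n}$ are each at most $R^{n+3}$) and the last uses $R\geq 4M$. This closes the induction and establishes $|b_n|\leq CR^{n}$ for all $n\geq 0$. Consequently $\limsup_{n\to\infty}|b_n|^{1/n}\leq R$, and by Cauchy--Hadamard the radius of convergence is at least $1/R>0$, as claimed.

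I do not expect a genuine obstacle here; the argument is entirely elementary. The only point requiring a little care is the simultaneous choice of $R$ and $C$, which must be made so that both the base of the induction and the inductive step close. The choices $R=\max(1,4M)$ and $C=\max_{0\leq k\leq 3}|b_k|/R^{k}$ are engineered precisely for this purpose: $R\geq 4M$ forces the geometric step $4M\,C\,R^{n+3}\leq C\,R^{n+4}$, while $R\geq 1$ lets me bound the four terms on the right by $4R^{n+3}$. (An alternative route, which I would mention only in passing, is to pass to the companion matrix of the recurrence and bound $|b_n|$ by a power of any submultiplicative matrix norm, giving the same geometric estimate; but the direct induction above is shorter and self-contained.)
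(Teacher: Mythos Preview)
Your proof is correct and takes essentially the same approach as the paper: both establish a geometric bound $|b_n|\leq CR^{n}$ directly from the recurrence and then invoke Cauchy--Hadamard. The only cosmetic difference is that the paper introduces an auxiliary majorizing sequence $\hat b_n$ with $\hat b_0=\cdots=\hat b_3=\max_k|b_k|$ and $\hat b_{n+4}=(T+1)\sum_{i=0}^{3}\hat b_{n+i}$ (with $T=\max_k|c_k|$), whereas you run a direct strong induction with the constants $R=\max(1,4M)$ and $C=\max_{0\le k\le 3}|b_k|/R^{k}$; the mathematical content is the same.
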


\begin{proof}
Let $M=\max\left \{ |b_0|,|b_1|,|b_2|,|b_3| \right \},\; T=\max\left \{ |c_0|,|c_1|,|c_2|,|c_3| \right \}$ and define a sequence $\left \{ \hat{b}_n \right \}_{n=0}^{\infty }$ by setting  $\;\hat{b}_0=\hat{b}_1=\hat{b}_2=\hat{b}_3=M$ and for $n\geq 0$ define $\hat b_{n+4}:=\left ( T+1 \right )\left (\hat b_{n+3}+\hat b_{n+2}+\hat b_{n+1}+\hat b_{n}   \right )$. Evidently $|b_n|\leq \hat b_{n}$
for every $n \geq 0$. Also, since $T+1>1$, $\left \{ \hat{b}_n \right \}_{n=0}^{\infty }$ is a non-decreasing sequence and we have $\hat b_{n+1}\leq 4\left ( T+1 \right )\hat{b}_n$ for $n \geq 3$. Hence, for every $n \geq 4$ we have $\hat b_{n}\leq \left (4\left ( T+1 \right )  \right )^{n-3}\hat{b}_3=\left (4\left ( T+1 \right )  \right )^{n-3}M$.  
\noindent Thus $\overline{\lim}\sqrt[n]{\hat b_{n}}\leq 4(T+1)$ and so we also have that $\overline{\lim}\sqrt[n]{|b_n|}\leq 4(T+1)$. Thus we get that the radius of convergence $R$ of the series (\ref{fifteen}) is greater than zero as claimed. 
\end{proof}

\noindent We deduce that the series (\ref{fifteen}) defines an analytic function in some neighbourhood of $z_0=0$. Let us find an explicit (and simple) formula to $f(z)$. For this we first write in five rows the expressions for $f(z),\; c_{0}zf(z),\;  c_{1}z^{2}f(z)\; c_{2}z^{3}f(z)$ and $c_{3}z^{4}f(z)$. We shall write it in a friendly manner to our purpose.

\begin{equation} \label{sixteen}
\begin{matrix}
f(z)= &b_{0}+  &b_{1}z+  &b_{2}z^2+  &b_{3}z^3+  &b_{4}z^4+  &b_{5}z^5+... \\ 
c_{0}zf(z)= &  &c_{0}b_{0}z+  &c_{0}b_{1}z^{2}+  &c_{0}b_{2}z^{3}+  &c_{0}b_{3}z^{4}+  &c_{0}b_{4}z^{5}+... \\ 
c_{1}z^{2}f(z)= &  &  &c_{1}b_{0}z^{2}+   &c_{1}b_{1}z^{3}+  &c_{1}b_{2}z^{4}+  &c_{1}b_{3}z^{5}+... \\ 
c_{2}z^{3}f(z)= &  &  &  &c_{2}b_{0}z^{3}+  &c_{2}b_{1}z^{4}+  &c_{2}b_{2}z^{5}+... \\ 
c_{3}z^{4}f(z)= &  &  &  &  &c_{3}b_{0}z^{4}+  &c_{3}b_{1}z^{5}+... 
\end{matrix}
\end{equation}
Now, by subtracting the last four equations from the first and getting by (\ref{fourteen}) that the 5'th, 6'th, 7'th,... columns of (\ref{sixteen}) are cancelled, we get:
\begin{equation} \label{seventeen}
\begin{aligned}
& f(z)\left ( 1-c_{0}z-c_{1}z^{2}-c_{2}z^{3}-c_{3}z^{4} \right )=b_{0}+\left ( b_{1}-b_{0}c_{0} \right )z+\\
& \left ( b_{2}-b_{1}c_{0}-b_{0}c_{1} \right )z^{2}+\left ( b_{3}-b_{2}c_{0}-b_{1}c_{1}-b_{0}c_{2} \right )z^{3}\;,
\end{aligned}
\end{equation}
that is $f$ is a rational function, $f(z)=\dfrac{p(z)}{q(z)},\; |p|\leq 3,\;|q|= 4;\; q(z)=-c_{3}z^4-c_{2}z^3-c_{1}z^2-c_{0}z+1$ and $p(z)$ is the right-hand side of (\ref{seventeen}). Since $|q|= 4$ we can find all its roots and get
$ q(z)=-c_{3}(z-a_{1})(z-a_{2})(z-a_{3})(z-a_{4})$ for some $a_{1}, a_{2}, a_{3}, a_{4}\in \mathbb{C}$, not necessarily distinct. Then, as in (\ref{thirteen}), we get by the formulas in (\ref{six}), (\ref{seven}) (note that $q(0)\neq 0$) and by Leibniz rule, an explicit and elementary formulas for the derivatives of $f$ at $z=0$. This gives elementary formulas to the elements of the recursive sequence 
$\left \{ b_n \right \}_{n=4}^{\infty }$ $\left (b_{n}=\dfrac{f^{(n)}(0)}{n!}  \right )$ as we wanted.
We shall not write down here these explicit (but complicated formulas).\\
\noindent In a similar way, we can find explicit formulas for a sequence that is determined by the rule
\begin{equation} \label{eighteen}
b_{n+2}=c_{0}b_{n+1}+c_{1}b_{n}\;, n\geq 0
\end{equation} 
$b_{0}, b_{1}$ are given, $c_{1}\neq 0$ or for a sequence given by
\begin{equation} \label{nineteen}
b_{n+3}=c_{0}b_{n+2}+c_{1}b_{n+1}+c_{2}b_{n}\;, n\geq 0
\end{equation} 
$b_{0}, b_{1}, b_{2}$ are given, $c_{2}\neq 0$
which are simpler than (\ref{fourteen}). For finding explicit formulas to the elements of the sequences (\ref{eighteen}) or (\ref{nineteen}) we have to find the roots of polynomials of degrees $2$ or $3$, respectively.\\
\noindent We remark that it is possible to find explicit formulas for the elements of sequences of the types (\ref{fourteen}), (\ref{eighteen}) and (\ref{nineteen}) also without using our formulas (\ref{six}) and (\ref{seven}). Indeed, suppose for example that from (\ref{fourteen}) we get $q(z)=-c_{3}(z-a_{1})^{2}(z-a_{2})(z-a_{3})$, where $a_{1}, a_{2}, a_{3}$ are distinct. Then we have that
$$f(z)=\dfrac{B_{1}^{(1)}}{z-a_{1}}+\dfrac{B_{2}^{(1)}}{(z-a_{1})^{2}}+\dfrac{B_{1}^{(2)}}{z-a_{2}}+\dfrac{B_{1}^{(3)}}{z-a_{3}}\;,$$
and we can find the $B_{j}^{(i)}$'s similarly to (\ref{twelve}). Then we can find easily any derivative of $f$ at $z_{0}=0$ and by this we can derive explicit and elementary formulas for $b_{n},\;, n \geq 4$. These formulas will be indeed explicit, not depending in some recursion rule, because here, in this case the number of the roots is fixed : $L=3$, as their multiplicities are known: $2, 1, 1$. In other sequences of the type (\ref{fourteen}) it can be $L=4$ with multiplicities $1, 1, 1, 1$; $L=2$ with multiplicities: $3, 1$ or $2, 2$ or  $L=1$ with one multiplicity, $4$. The sequences (\ref{eighteen}) and (\ref{nineteen}) can be treated similarly.  

\noindent Let us discuss the general case.

\noindent Let $k \geq 1$ and let $b_{0}, b_{1},...,b_{k}$ be given. Let also $c_{0}, c_{1},...,c_{k}$ be constants with $c_{k} \neq 0$ and let the sequence $\left \{ b_n \right \}_{n=k+1}^{\infty }$ be defined by the recursive rule 
$b_{n+k+1}=c_{0}b_{n+k}+c_{1}b_{n+k-1}+...+c_{k-1}a_{n+1}+c_{k}a_{n},\; n\geq 0$. We showed how to get an explicit formula for $\left \{ b_n \right \}_{n=k+1}^{\infty }$ for the cases $k+1=2, 3, 4$. In view of the ability of using formulas from (\ref{six}) and (\ref{seven}), the \textit{'only'} barrier to get explicit
formulas to the cases $k+1=5, 6, 7, ...$, is the lack of ability of finding the zeros of general polynomial of degree at least $5$. However, we can fix in advance the constants $c_{0}, c_{1},...,c_{k}$ to be the coefficients of some polynomial that we know its zeros. Specifically, if $q(z)=A(z-a_{1})(z-a_{2})\cdots(z-a_{k+1}),\; k+1 \geq 5$ and assume for simplicity that the free coefficient, $A\cdot a_{1}\cdot a_{2}\cdots a_{k+1}(-1)^{k+1}$ is $1$, then we have $q(z)=1-c_{0}z-c_{1}z^{2}-...-c_{k}z^{k+1}$, with constants $c_{j}$'s that we can calculate. Now, let $p(z)$ be the polynomials of degree $k$, which is analogous to the previous $p(z)$ that is the right-hand side of (\ref{seventeen}). Then by the formulas from (\ref{six}) and (\ref{seven}) we get the derivatives of $f(z)$ at $z_{0}=0$ and these will give us the elements of the sequence $\left \{ b_n \right \}_{n=k+1}^{\infty }$.

\subsection{Applications by computers}

\noindent  As we now have explicit and elementary formulas for derivatives of any order of any rational function for which we know the zeros of its denominator (especially in view of remark 3.6, that gives explicit values for $s_{1}, s_{2},...,s_{L},s$, whose we need for calculating the formulas from (\ref{six}) and (\ref{seven})), it is simpler now to write a computer program, that calculates the derivatives of any such function or to write a program that integrates such functions. Moreover, for the case of the general interpolation problem (see subsection 6.1), the situation is even better. We need there only to find derivative of functions that are given in the form (\ref{five}), (the zeros of the denominators there are the interpolation points $a_{1}, a_{2},...,a_{L}$), so for this case, applying Theorem 3.1 by computer is straightforward. 

\begin{thebibliography}{299} {\footnotesize }
 
\bibitem{Ahlfors} {Ahlfors L. V.}, Complex Analysis,
{\em  McGraw-–Hill, 3rd Edition, New York}, (1979).

\bibitem{Craik} {Craik A. D. D.}, Prehistory of Fa\`a di Bruno's Formula, 
{\em The American Mathematical Monthly}, {Vol. 112, No. 2}, 217--234, (2005).

\bibitem{Gould} {Gould H. W.}, Combinatorial Identities: Table I: Intermediate
Techniques for Summing Finite Series, {\em From the seven unpublished manuscripts of H. W. Gould
Edited and Compiled by J. Quaintance},  (2010).

\bibitem{HicYang} {Hickernell F.J. and Yang S.}, Explicit Hermite Interpolation Polynomials via the Cycle Index with Applications, {\em International Journal of Numerical Analysis and Modeling}, {Vol. 5, No. 3}, 457--465, (2008).

\bibitem{Kaniel} {Kaniel S.}, Introduction to numerical analysis, {\em Academon press, the Hebrew University in Jerusalem} (In Hebrew), (1971).

\bibitem{KeDeTsPe} {Kechriniotis A. I., Delibasis K. K., Tsonos C., Petropoulos N.}, A new closed formula for the Hermite interpolating polynomial with applications on the spectral decomposition of a matrix, {\em arXiv-math 1112.4769v2}, (2012).

\bibitem{KimSug} {Kim S., Sugawa T.}, Invariant Schwarzian Derivatives of Higher Order, 
{\em Complex Analysis and Operator Theory}, {Vol. 5 (3)}, 659--670, (2011).

\bibitem{LanTis} {Lancaster P. and Tismenetsky M.}, The Theory of Matrices, 
{\em Academic Press, 2nd Edition}, (1985).

\bibitem{Landau} {Landau E.}, Differential and Integral Calculus, 
{\em Chelsea Publishing, 3rd Edition}, (1965).

\bibitem{Meizler} {Meizler D.}, Infinitesimal Calculus, 
{\em Academon Press, the Hebrew University in Jerusalem} (In Hebrew, Edited by M. Jarden), (1993).

\bibitem{Sabatini} {Sabatini M.}, The period functions’ higher order derivatives, 
{\em Journal of Differential Equations}, {253}, 2825--2845, (2012).

\bibitem{SakVer} {Sakai R. and V\'ertesi P}, Hermite-Fej\'er interpolations of higher order. III, 
{\em Studia Sci. Math. Hungar.}, {28}, 87--97, (1993).

\bibitem{SlevSaf} {Slevinsky R. M., Safouhi H.}, New formulae for higher order derivatives and applications, {\em Journal of Computational and Applied Mathematics}, {Vol. 233}, 405--419, (2009).

\bibitem{Spitzbart} {Spitzbart A.}, A Generalization of Hermite's Interpolation Formula, {\em The American Mathematical Monthly}, {Vol. 67, No. 1}, 42--46, (1960).

\end {thebibliography}
\end{document}